\newcommand{\al}{\alpha}
\newcommand{\be}{\beta}
\newcommand{\de}{\delta}
\newcommand{\la}{\lambda}
\newcommand{\om}{\omega}
\newcommand{\eps}{\varepsilon}
\theoremstyle{plain}
\numberwithin{equation}{section}
\newtheorem{thm}{Theorem}[section]
\newtheorem{lem}[thm]{Lemma}
\newtheorem{cor}[thm]{Corollary}
\theoremstyle{definition}
\newtheorem{ip}[thm]{Inverse Problem}
\newtheorem{defin}[thm]{Definition}
\theoremstyle{remark}
\newtheorem{remark}[thm]{Remark}
\begin{document}

\begin{center}
{\Large\bf Third-order differential operators\\[0.2cm] with a second-order distribution coefficient}
\\[0.5cm]
{\bf Natalia P. Bondarenko}
\end{center}

\vspace{0.5cm}

{\bf Abstract.} In this paper, we study differential operators associated with the formal expression $y''' + s(\sigma' y)' + s \sigma' y' + \kappa \sigma'' y$ with distribution coefficient $\sigma'' \in W_3^{-2}$, where $s$ and $\kappa$ are constants. The uniqueness theorems are proved for the inverse spectral problems that consist in the recovery of $\sigma$ from the Weyl-Yurko matrix on a finite interval and on the half-line. In addition, we discuss the reconstruction of $\sigma$ and formulate some open problems.

\medskip

{\bf Keywords:} third-order differential operators; distribution coefficients; inverse spectral problems; method of spectral mappings. 

\medskip

{\bf AMS Mathematics Subject Classification (2020):} 34A55 34B09 34B40 34L40

\section{Introduction} \label{sec:intr}

In this paper, we study differential operators associated with the formal expression 
\begin{equation} \label{expr}
\ell_{\sigma, s, \kappa}(y) := y''' + s(\sigma' y)' + s \sigma' y' + \kappa \sigma'' y, \quad s \in \{ 0, 1 \}, \quad \kappa \in \mathbb C,
\end{equation}
where $\sigma$ is a complex-valued function of $L_3(0,T)$, $T \le \infty$, and $s + |\kappa| \ne 0$.

Mirzoev and Shkalikov have proposed a regularization approach for differential operators with distribution coefficients for even orders in \cite{MS16} and for odd orders in \cite{MS19}. This approach allows one to introduce quasi-derivatives and to reduce 
differential equations with coefficients from certain generalized function spaces to first-order systems with integrable coefficients. 

The regularization of Mirzoev and Shkalikov was applied to studying asymptotics of solutions \cite{KM19, SS20, MK20, KMS23} and inverse spectral problems \cite{Bond21, Bond22, Bond23-mmas, Bond23-res, Bond23-reg, Bond24}. Some ideas of regularizing differential operators with distribution coefficients were developed by
Vladimirov \cite{Vlad04, Vlad17} and then applied to the investigation of oscillation properties for fourth-order operators in \cite{VS21}. In \cite{But24}, Buterin introduced a nonlocal generalization of the quasi-derivatives from \cite{MS16, MS19}. In \cite{But25}, he proposed an alternative regularization approach, which consists in the reduction to a special operator-differential expression, and proved the completeness of root functions for a wide class of operators including the higher-order ones with distribution coefficients. However, the differential expression \eqref{expr} has coefficients of higher singularity order than the Mirzoev-Shkalikov construction \cite{MS16, MS19} allows to handle. Specifically, the odd order regularization of \cite{MS19} works for $\ell_{\sigma,s,\kappa}(y)$ only with $\sigma \in W_{1,loc}^1(0,T)$.

In this paper, we present a matrix function associated with the differential expression \eqref{expr} and study the corresponding first-order system. Furthermore, we investigate inverse spectral problems that consist in the recovery of $\sigma$ together with some coefficients of boundary conditions from the so-called Weyl-Yurko matrix. This spectral characteristic generalizes the Weyl matrix that was introduced by Yurko \cite{Yur89, Yur92, Yur02} for higher-order differential operators with regular (integrable) coefficients. Our approach is based on the method of spectral mappings, which has been developed in \cite{Yur89, Yur92, Yur02, Leib72} for the case of regular coefficients and in \cite{Bond21, Bond22, Bond23-mmas, Bond23-res, Bond23-reg, Bond24} for differential equations with distribution coefficients admitting Mirzoev-Shkalikov regularization. As a result, we prove the uniqueness theorems for recovering $\sigma$ and coefficients of the boundary conditions on a finite interval and on the half-line. In addition, we discuss the reconstruction of $\sigma$ from the Weyl-Yurko matrix and formulate a series of open problems. 

We denote by $\de_{k,j}$ the Kronecker delta, by $I$ the unit matrix of size $(3 \times 3)$, by the upper index $^T$ the transpose of vectors, and by $C_0^{\infty}(0,T)$ the space of infinitely differentiable functions of finite support in $(0,T)$.

\section{Associated matrix}

Denote by $\mathfrak F_n$ ($n \ge 2$) the class of $(n \times n)$ matrix functions $F(x) = [f_{kj}(x)]_{k,j=1}^n$ satisfying the conditions
\begin{equation} \label{condF}
f_{kj} = \de_{k+1,j}, \quad k < j, \qquad f_{kj} \in L_{1,loc}(0,T), \quad k \ge j.
\end{equation}

For every $F \in \mathfrak F_n$, one can define the quasi-derivatives
\begin{equation} \label{quasi}
y^{[0]} := y, \quad y^{[k]} := (y^{[k-1]})' - \sum_{j = 1}^k f_{kj} y^{[j-1]}, \quad k = \overline{1,n}.
\end{equation}

Introduce the notion of associated matrix analogously to \cite[Definition~1]{MS16}.

\begin{defin} \label{def:ass}
A matrix function $F(x; \sigma) = [f_{kj}(x; \sigma)]_{j,k=1}^n$ is called an \textit{associated matrix} for a formal differential expression $\ell_{\sigma}$ of order $n$ with a coefficient $\sigma$ of some class $\mathcal K(0,T)$
if the following conditions are satisfied:

(i) $f_{kj}(x; \sigma) = \phi_{kj}(\sigma(x))$,
where $\phi_{kj}$ are infinitely differentiable functions; 

(ii) $F(\cdot; \sigma) \in \mathfrak F_n$ for each fixed $\sigma \in \mathcal K(0,T)$;

(iii) $\ell_{\sigma}(y) \equiv y^{[n]}$
for all $\sigma$ and $y$ in $C^{\infty}_0(0,T)$, where the quasi-derivative is defined by \eqref{quasi} using the elements $f_{kj}(x;\sigma)$.
\end{defin}

If a matrix function $F(x) = F(x;\sigma)$ is associated with some differential expression $\ell_{\sigma}$, then, for sufficiently smooth $\sigma$, the equation 
\begin{equation} \label{eqvl}
\ell_{\sigma}(y) = \lambda y, \quad x \in (0,T), 
\end{equation}
is equivalent to the first order system
\begin{equation} \label{sys}
Y' = (F(x) + \Lambda) Y, \quad x \in (0,T),
\end{equation}
where $Y = [y^{[0]}, y^{[1]}, \dots, y^{[n-1]}]^T$ and $\Lambda$ is the constant $(n \times n)$ matrix, whose element at position $(n,1)$ equals $\la$ and all the other elements equal zero.
Indeed, the first $(n-1)$ rows of \eqref{sys} are equivalent to \eqref{quasi} for $k = \overline{1,n-1}$ and the $n$-th row, to the equation $y^{[n]} = \lambda y$. On the other hand, in contrast to equation \eqref{eqvl}, the system \eqref{sys} is correctly defined for every $\sigma \in \mathcal K(0,T)$, since $f_{kj} \in L_{1,loc}(0,T)$ due to condition~(ii) of Definition~\ref{def:ass}.

Direct calculations show that the matrix function 
\begin{equation} \label{defF}
F = \begin{bmatrix}
        -(s+\kappa) \sigma & 1 & 0 \\
        -\bigl(\tfrac{3}{2}\kappa^2 + \tfrac{1}{2}s^2 + 2\kappa s\bigr)\sigma^2 & 2 \kappa s \sigma & 1 \\
        \kappa(\kappa^2 - s^2)\sigma^3 & -\bigl(\tfrac{3}{2}\kappa^2 + \tfrac{1}{2}s^2 - 2\kappa s\bigr)\sigma^2 & (s-\kappa)\sigma
    \end{bmatrix}
\end{equation}
is associated with the differential expression \eqref{expr} for $\sigma \in L_{3,loc}(0,T)$ and, in the case $s = 1$ and $\kappa \in \{-1, 0, 1\}$, for $\sigma \in L_{2,loc}(0,T)$.
Moreover, the relation $\ell_{\sigma,s,\kappa}(y) = y^{[3]}$ holds for every $\sigma \in C^2$ and $y \in C^3$. This motivates the investigation of the system \eqref{sys} with the matrix \eqref{defF} for $\sigma \in L_3(0,T)$. We mention that the matrix \eqref{defF} for $s = 0$, $\kappa = -1$ appeared in \cite{VNS21}.

\begin{remark}
For $y \in W_{3/2,loc}^2(0,T)$, the expression $\ell_{\sigma,s,\kappa}(y)$ can be understood as a generalized function (a linear continuous functional) acting on a test function $z \in C_0^{\infty}(0,T)$ as follows:
$$
(y''' + s(\sigma' y)' + s \sigma'y' + \kappa\sigma'' y, z) = \int_0^T \bigl( -y'' z' + (\kappa+s)\sigma y z'' + 2\kappa\sigma y'z' + (\kappa-s)\sigma y'' z \bigr) \, dx.
$$
However, this meaning is not related to the system \eqref{sys}, because the requirement $y^{[1]} = y' + (s + \kappa)\sigma y \in AC_{loc}(0,T)$, which is essential for the correctness of \eqref{sys}, implies that $y \not\in W_{3/2,loc}^2(0,T)$ if $(s + \kappa)\sigma \not\in AC_{loc}(0,T)$. Thus, the matrix \eqref{defF} is \textit{associated} with the differential expression \eqref{expr} but not \textit{compatible} in the sense of \cite[Definition~2]{MS16}.
Note that a family of matrices that are compatible but not associated with some differential expressions was constructed in \cite{Bond23-reg}.
\end{remark}

\section{Inverse problem on a finite interval} \label{sec:fin}

Consider the system \eqref{sys} with the matrix \eqref{defF} for $\sigma \in L_3(0,1)$ and fixed numbers $s$, $\kappa$.

Let us introduce the boundary conditions analogously to the inverse problem theory for higher-order differential operators \cite{Yur02}.
Define $(3 \times 3)$ matrices $U = P_U L_U$ and $V = P_V L_V$, where $P_U$ and $P_V$ are permutation matrices, $L_U$ and $L_V$ are unit lower triangular matrices. Denote by $U_k$ and $V_k$ ($k = \overline{1,3}$) the rows of the matrices $U$ and $V$, respectively. Boundary conditions will be defined by the linear forms
$$
U_i Y(0) = \sum_{j = 1}^{p_i} u_{ij} y_j(0), \quad V_l Y(1) = \sum_{j = 1}^{q_l} v_{lj} y_j(1),
$$
where $(p_1, p_2, p_3)$ and $(q_1, q_2, q_3)$ are the permutations corresponding to the matrices $P_U$ and $P_V$, respectively, $U = [u_{ij}]_{i,j=1}^3$, $V = [v_{lj}]_{l,j=1}^3$, $u_{i,p_i} = 1$, $v_{l,q_l} = 1$, $Y(x) = [y_j(x)]_{j = \overline{1,3}}^T$.

Let $C(x, \la)$ be the $(3 \times 3)$-matrix solution of the initial value problem for the system \eqref{sys} with the condition 
\begin{equation} \label{initC}
C(0, \la) = U^{-1}. 
\end{equation}

Next, introduce the vector solutions $\Phi_k(x, \la)$ ($k = \overline{1,3}$) of the system \eqref{sys} satisfying the boundary conditions
\begin{equation} \label{bcPhi}
U_i \Phi_k(0,\la) = \de_{i,k}, \quad i = \overline{1,k}, \qquad
V_l \Phi_k(1, \la) = 0, \quad l = \overline{1,3-k}.
\end{equation}

The elements of the matrix function $C(x, \la)$ are entire functions in $\la$ of order not greater than $\frac{1}{3}$ for each fixed $x \in [0,1]$, and $\Phi_3(x, \la)$ coincides with the third column of $C(x, \la)$. For each $k \in \{ 1, 2 \}$ and fixed $x \in [0,1]$, the elements of $\Phi_k(x, \la)$ are meromorphic in $\la$ and have poles at the eigenvalues of the boundary value problem for the system \eqref{sys} with the conditions
\begin{equation} \label{bcY} % remove this number
U_i Y(0) = 0, \quad i = \overline{1,k}, \qquad 
V_l Y(1) = 0, \quad l = \overline{1, 3-k}.
\end{equation}

Denote by $\Phi(x, \la)$ the $(3 \times 3)$ matrix of the columns $\Phi_k(x, \la)$ ($k = \overline{1,3}$). Obviously, the columns of the matrices $C(x, \la)$ and $\Phi(x, \la)$ form fundamental systems of solutions for \eqref{sys}. Therefore, they are related as follows: \begin{equation} \label{defM}
\Phi(x, \la) = C(x,\la) M(\la), 
\end{equation}
where $M(\la)$ is the so-called \textit{Weyl-Yurko matrix}. Consider the following inverse spectral problem.

\begin{ip} \label{ip:finite}
Suppose that the matrices $P_U$ and $P_V$ are known.
Given the Weyl-Yurko matrix $M(\la)$, find $\sigma$ and the elements of the matrices $U$ and $V$ whenever possible.
\end{ip}

For simplicity, we assume that the structure of the boundary conditions, which is specified by the permutation matrices $P_U$ and $P_V$, is fixed. In fact $P_U$ can be found from the asymptotics of the Weyl-Yurko matrix, which is analogous to (2.1.19) in \cite{Yur02}.
% Yurko 2007, (3.1.11).
Since the elements of the Weyl-Yurko matrix $M(\la)$ are meromorphic, in Theorem~\ref{thm:uniqf} it is sufficient to specify them in a countable set of regular points $\{ z_k \}_{k \ge 1}$ having a limit point.   

First, let us discuss the recovery of the matrix $V$.
Note that the vector $V_3$ does not participate in the problem statement, so it cannot be determined from $M(\la)$.

\begin{lem} \label{lem:findV}
Given the permutation $(q_1, q_2, q_3)$ and the matrix function $\Phi(x, \la)$, one can uniquely find the vector $V_1$ and the following information about $V_2$ in the possible cases:
\begin{itemize}
\item $q_2 = 1$: $V_2 = [1, 0, 0]$.
\item $(q_1, q_2) = (1, 2)$: $V_1 = [1, 0, 0]$, $V_2 = [v_{21}, 1, 0]$, where $v_{21}$ can be arbitrary.
\item $(q_1, q_2) = (1, 3)$: $V_1 = [1, 0, 0]$, $V_2 = [v_{21}, v_{22}, 1]$, where $v_{22}$ is uniquely determined and $v_{21}$ can be arbitrary.
\item $(q_1, q_2) = (2, 3)$: $V_1 = [v_{11}, 1, 0]$, $V_2 = [v_{21}, v_{22}, 1]$, where the number $(v_{21} - v_{11} v_{22})$ is uniquely determined.
\item $(q_1, q_2) = (3, 2)$: $V_1 = [v_{11}, v_{12}, 1]$, $V_2 = [v_{21}, 1, 0]$, where $v_{21}$ is uniquely determined.
\end{itemize}
\end{lem}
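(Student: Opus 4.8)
The plan is to read off $V_1$ and $V_2$ from the conditions that $\Phi_1$ and $\Phi_2$ satisfy at the right endpoint. Specializing \eqref{bcPhi} to $n=3$, we have, identically in $\la$,
$$V_1 \Phi_1(1,\la) = V_2 \Phi_1(1,\la) = 0, \qquad V_1 \Phi_2(1,\la) = 0 .$$
Hence $\Phi_1(1,\la)$ lies in the annihilator $\{V_1,V_2\}^{\perp}$ and $\Phi_2(1,\la)$ lies in $\{V_1\}^{\perp}$, where orthogonality is taken with respect to the symmetric bilinear pairing $V_l \cdot y = \sum_{j} v_{lj} y_j$. Since $V = P_V L_V$ is invertible, its rows $V_1, V_2, V_3$ are linearly independent, so $\dim \{V_1\}^{\perp} = 2$ and $\dim \{V_1,V_2\}^{\perp} = 1$.

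First I would fix any point $\la_0$ (say from the given set $\{z_k\}$) at which $\Phi(x,\la)$ is regular. Then the columns $\Phi_1(1,\la_0), \Phi_2(1,\la_0), \Phi_3(1,\la_0)$ are linearly independent, since they form a fundamental system for \eqref{sys}. Consequently $\Phi_1(1,\la_0)$ is a nonzero vector of the one-dimensional space $\{V_1,V_2\}^{\perp}$, while $\Phi_1(1,\la_0)$ and $\Phi_2(1,\la_0)$ are two independent vectors of the two-dimensional space $\{V_1\}^{\perp}$. Therefore
$$\{V_1\}^{\perp} = \mathrm{span}\{\Phi_1(1,\la_0),\Phi_2(1,\la_0)\}, \qquad \{V_1,V_2\}^{\perp} = \mathrm{span}\{\Phi_1(1,\la_0)\},$$
and both right-hand sides are computable from the known matrix $\Phi$.

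To recover $V_1$, I would take the annihilator of the two-dimensional space $\{V_1\}^{\perp}$, which is the one-dimensional $\mathrm{span}\{V_1\}$; thus $V_1$ is found up to a scalar. The structure $V = P_V L_V$ forces $V_1$ to have zeros to the right of the known position $q_1$ and the value $1$ there, so dividing by its $q_1$-th component (nonzero, being a multiple of $v_{1,q_1}=1$) fixes the scalar and yields $V_1$ uniquely in the asserted forms $[1,0,0]$, $[v_{11},1,0]$, or $[v_{11},v_{12},1]$ according to $q_1\in\{1,2,3\}$. For $V_2$, let $w$ generate $\{V_1,V_2\}^{\perp} = \mathrm{span}\{\Phi_1(1,\la_0)\}$; equivalently $w$ is proportional to the cross product $V_1\times V_2$, so that knowledge of $V_1$ and $w$ pins down $V_2$ only modulo addition of multiples of $V_1$ and an overall scaling. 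Imposing the normalization $v_{2,q_2}=1$ and the sparsity dictated by $q_2$, and substituting into the relation $V_2 w = 0$, then extracts precisely the determinable data listed in each case.

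The remaining six-fold case check is routine: one inserts the known pattern of $V_2$ into $V_2 w = 0$, and whether an individual entry (such as $v_{21}$ or $v_{22}$) or only a combination (such as $v_{21}-v_{11}v_{22}$) is recovered is governed by which components of $w$ vanish, which is in turn controlled by $q_1$ (for instance, among the relevant cases $w_1 = 0$ exactly when $q_1 = 1$, which is why $v_{21}$ stays free when $(q_1,q_2)=(1,2)$ but is determined when $(q_1,q_2)=(3,2)$). The only genuinely non-mechanical step, and the point I expect to require the most care, is the dimension count yielding the two displayed span identities: it relies simultaneously on the invertibility of $V$ (so the annihilators have the stated dimensions) and on the fundamental-system property of $\Phi(1,\la_0)$ at a regular point (so the given vectors actually exhaust those annihilators). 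Everything past that is elementary linear algebra together with the normalization $v_{l,q_l}=1$.
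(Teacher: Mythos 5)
Your proof is correct, and it uses exactly the same source of information as the paper's proof: the right-endpoint conditions from \eqref{bcPhi}, namely $V_1\Phi_1(1,\la)=V_2\Phi_1(1,\la)=0$ and $V_1\Phi_2(1,\la)=0$, combined with nondegeneracy and the normalization $v_{l,q_l}=1$. The difference is in execution, and it is worth noting. The paper works identically in $\la$ and case by case: it solves the scalar equations directly for the unknown entries (e.g.\ $v_{22}=-\Phi_{31}(1,\la)/\Phi_{21}(1,\la)$ in the case $(q_1,q_2)=(1,3)$), and the non-vanishing of the denominator is deduced from the nontriviality of the single column $\Phi_1$ alone, via $U_1\Phi_1(0,\la)=1$. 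You instead freeze a regular point $\la_0$, invoke the fundamental-system property $\det\Phi(1,\la_0)\ne 0$, and derive the two span identities $\{V_1\}^{\perp}=\mathrm{span}\{\Phi_1(1,\la_0),\Phi_2(1,\la_0)\}$ and $\{V_1,V_2\}^{\perp}=\mathrm{span}\{\Phi_1(1,\la_0)\}$, after which $V_1$ is recovered once and for all by taking the annihilator of the annihilator, and the determinable part of $V_2$ drops out of the single equation $V_2 w=0$ with $w=\Phi_1(1,\la_0)$. Your organization buys uniformity: the claim that $V_1$ is \emph{always} uniquely determined is proved in one stroke (the paper's displayed case $(1,3)$ never has to confront this, since there $V_1=[1,0,0]$ is forced by the structure, and the other cases are dismissed as analogous), and the case distinctions reduce transparently to which components of $w$ vanish. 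The paper's route buys economy: it needs only $\Phi_1\not\equiv 0$ rather than full nondegeneracy at a chosen regular point, and it exhibits the recovered quantities as explicit ratios of entries of $\Phi(1,\la)$. Both arguments are complete and compatible with the lemma's case list.
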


\begin{proof}
Consider the case $(q_1, q_2) = (1, 3)$. Then the vector function $\Phi_1(x,\la) = [\Phi_{j1}(x,\la)]_{j = \overline{1,3}}^T$ satisfies the conditions
$$
\Phi_{11}(1,\la) = 0, \quad v_{21} \Phi_{11}(1,\la) + v_{22} \Phi_{21}(1,\la) + \Phi_{31}(1,\la) = 0,
$$
which are equivalent for each $v_{21}$. We find $v_{22} = -\dfrac{\Phi_{31}(1,\la)}{\Phi_{21}(1,\la)}$, where the denominator is not identically zero. Otherwise, one gets $\Phi_{j1}(1,\la) \equiv 0$ for $j = \overline{1,3}$, so $\Phi_1(x,\la) \equiv 0$, which is impossible due to $U_1 \Phi_1(0,\la) = 1$. The vector functions $\Phi_2(x, \la)$ and $\Phi_3(x, \la)$ do not give us additional information on $V_2$. The other cases can be considered analogously.
\end{proof}

Let us call the triple $(\sigma, U, V)$ by the problem $\mathcal L$. Along with $\mathcal L$, consider another problem $\tilde{\mathcal L} = (\tilde \sigma, \tilde U, \tilde V)$ of the same form but with different coefficients. We agree that, if a symbol $\al$ denotes an object related to $\mathcal L$, then the symbol $\tilde \al$ will denote the similar object related to $\tilde{\mathcal L}$.

\begin{thm} \label{thm:uniqf}
Suppose that $U = \tilde U$, $P_V = \tilde P_V$, and $M(\la) \equiv \tilde M(\la)$. Then $\sigma(x) = \tilde \sigma(x)$ a.e. on $(0,1)$. In other words, the Weyl-Yurko matrix $M(\la)$ uniquely specifies the function $\sigma$ in $L_3(0,1)$, while $U$ and $P_V$ are known. Moreover, the vector $V_1$ and the certain elements of the vector $V_2$ are uniquely specified by $M(\la)$ according to Lemma~\ref{lem:findV}.
\end{thm}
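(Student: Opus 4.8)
The plan is to follow the method of spectral mappings by constructing the matrix of the spectral mapping and proving that it reduces to the identity. First I would record a Liouville-type observation: since $\Lambda$ has zero trace and $\operatorname{tr} F$ does not depend on $\la$, the Wronskian $\det C(x,\la) = \det(U^{-1}) \exp \int_0^x \operatorname{tr} F(t)\, dt$ is a nonzero constant in $\la$ for each fixed $x$ (here $\det U = \pm 1$). Consequently the entries of $\tilde C(x,\la)^{-1} = (\det \tilde C)^{-1} \operatorname{adj} \tilde C(x,\la)$ are entire in $\la$, and therefore so are the entries of $C(x,\la)\tilde C(x,\la)^{-1}$.

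Next I would use the hypothesis $M(\la) \equiv \tilde M(\la)$ together with \eqref{defM} to set $P(x,\la) := \Phi(x,\la)\tilde\Phi(x,\la)^{-1}$ and rewrite it, at regular points, in the equivalent form
\begin{equation*}
P(x,\la) = \Phi(x,\la)\tilde\Phi(x,\la)^{-1} = C(x,\la)\, M(\la)\tilde M(\la)^{-1}\, \tilde C(x,\la)^{-1} = C(x,\la)\tilde C(x,\la)^{-1}.
\end{equation*}
The right-hand expression shows that $P(x,\cdot)$ extends to an entire function (the poles of $\Phi$ at the eigenvalues and the poles of $\tilde\Phi^{-1}$ at the zeros of $\det \tilde M$ must cancel). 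The left-hand expression is the one I would use to control the growth of $P$ as $|\la| \to \iy$.

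The hard part will be the asymptotic analysis of the Weyl solutions $\Phi_k$, $\tilde\Phi_k$ as $|\la| \to \iy$ for the system \eqref{sys} with merely $\sigma \in L_3$. Splitting the $\la$-plane into the sectors determined by the three cubic branches $\rho, \rho\om, \rho\om^2$ (with $\om = e^{2\pi i/3}$ and $\rho^3 \sim \la$) and ordering the exponentials $\exp(\rho\om^j x)$ on each sector, I would establish, by a Birkhoff-type successive-approximation argument adapted to the integrable coefficient $F$, that each Weyl solution is dominated by a single free exponential and that $P(x,\la) = \Phi(x,\la)\tilde\Phi(x,\la)^{-1} \to I$ as $|\la| \to \iy$ within each sector. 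This is the main obstacle: the lack of smoothness of $\sigma$ forces careful remainder estimates, and one must show that conjugation by the exponentially large matrix of free solutions leaves the relevant corrections bounded and vanishing in the limit.

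Granting these asymptotics, I would finish as follows. The function $P(x,\cdot)$ is entire and bounded on $\{ |\la| \ge R \}$, hence, being continuous on the complementary disk, bounded on $\mathbb C$; by Liouville's theorem it is therefore independent of $\la$, and its constant value equals $\lim_{|\la|\to\iy} P(x,\la) = I$. Thus $C(x,\la) \equiv \tilde C(x,\la)$. Differentiating in $x$ and using that both matrices solve \eqref{sys} gives $(F(x) - \tilde F(x)) C(x,\la) = 0$; since $\det C \ne 0$, we obtain $F = \tilde F$ a.e., and inspecting the explicit entries of \eqref{defF} (for each admissible pair $(s,\kappa)$ at least one entry carries a nonzero coefficient of $\sigma$, $\sigma^2$, or $\sigma^3$) yields $\sigma = \tilde\sigma$ a.e.\ on $(0,1)$. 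Finally, $C = \tilde C$ together with $M = \tilde M$ gives $\Phi = C M = \tilde C \tilde M = \tilde\Phi$, so the asserted recovery of $V_1$ and of the determined elements of $V_2$ follows by applying Lemma~\ref{lem:findV} to the common matrix function $\Phi$.
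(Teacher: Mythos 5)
There is a genuine gap, and it sits exactly where you put the ``hard part'': the claimed limit $P(x,\la)=\Phi(x,\la)\tilde\Phi^{-1}(x,\la)\to I$ as $|\la|\to\infty$ is false for third-order systems. The correct asymptotics, which the paper imports from \cite{Bond21} as \eqref{asymptr}, give $o(1)$ above the diagonal and $1+o(1)$ on it, but only $o(\la)$ \emph{below} the diagonal: the $(j,k)$ entries with $j>k$ generically grow like $\rho^{j-k}$ (where $\la=\rho^3$), because the rows of $\Phi$ and $\tilde\Phi^{-1}$ carry different powers of $\rho$ coming from the orders of the quasi-derivatives, and conjugation of the Birkhoff error terms by the matrix of free exponentials does \emph{not} leave the below-diagonal corrections bounded. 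Consequently, Phragm\'en--Lindel\"of plus Liouville yield only that $R(x,\la)$ is a constant-in-$\la$ \emph{unit lower triangular} matrix $R(x)$, not the identity. Your stronger claim cannot be repaired: your asymptotic argument uses only the permutation parts $P_U$, $P_V$ of the boundary matrices (the unit lower triangular factors $L_U$, $\tilde L_U$ contribute only lower-order terms to the boundary forms), so if the limit $P\to I$ were true, the theorem would follow assuming merely $P_U=\tilde P_U$. But that conclusion is false: by the remark following the theorem (and by Corollary~\ref{cor:U}), for $s=1$ one can shift $\sigma$ by a constant $c$ and compensate by changing $L_U$ so that $M(\la)$ is unchanged; for such a pair, $\Phi\tilde\Phi^{-1}$ equals a nontrivial constant lower triangular matrix (e.g.\ $r_{21}=-(1+\kappa)c\ne 0$), which visibly does not tend to $I$.

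Because of this, the entire second half of the argument is missing. After the Liouville step one must: (i) differentiate $R(x)\tilde\Phi=\Phi$ to obtain the ODE $R'+R\tilde F=FR$ as in \eqref{eqR}; (ii) read off from it $r_{21}=-a_{11}\hat\sigma$, $r_{32}=a_{33}\hat\sigma$ as in \eqref{r21} and the two expressions \eqref{relr31} for $r_{31}$, which force $\hat\sigma'=0$ when $s=1$ and $\hat\sigma''=0$ when $s=0$; and (iii) invoke the hypothesis $U=\tilde U$ --- which your proof never uses in an essential way --- through $R(0)=C(0,\la)\tilde C^{-1}(0,\la)=U^{-1}\tilde U=I$, giving $\hat\sigma(0)=\hat\sigma'(0)=0$ and hence $\hat\sigma\equiv 0$. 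Only then does one get $R\equiv I$, $\Phi\equiv\tilde\Phi$, and the applicability of Lemma~\ref{lem:findV}. Your closing deductions (entireness of $C\tilde C^{-1}$ via the Wronskian, and the chain $C\equiv\tilde C\Rightarrow F=\tilde F\Rightarrow\sigma=\tilde\sigma$, followed by Lemma~\ref{lem:findV}) are sound in themselves, but they all rest on the unprovable limit $P\to I$.
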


\begin{proof}
Define the matrix of spectral mappings
$$
R(x, \la) = [r_{jk}(x,\la)]_{j,k = 1}^3 := \Phi(x, \la) \tilde \Phi^{-1}(x,\la).
$$
Using \eqref{defM} and the equality $M(\la) \equiv \tilde M(\la)$, we get
\begin{equation} \label{RC}
    R(x, \la) = C(x, \la) \tilde C^{-1}(x, \la).
\end{equation}

One can easily show that $\det(\tilde C(x, \la)) \equiv \pm 1$. Consequently, the elements of $\tilde C^{-1}(x, \la)$ are entire functions in $\la$ of order not greater than $\frac{1}{3}$ for each fixed $x \in [0,1]$, and so do the elements $r_{jk}(x, \la)$. On the other hand, they possess the following asymptotics (see \cite[p.~14]{Bond21}):
\begin{equation} \label{asymptr}
r_{jk}(x, \la) = \begin{cases}
o(1), & j < k, \\
1 + o(1), & j = k, \\
o(\la), & j > k,
\end{cases}
\qquad |\la| \to \infty, \quad \arg \la = \be \not\in \{0, \pi\}, \quad x \in [0,1).
\end{equation}
Indeed, the proof of the asymptotics \eqref{asymptr} in \cite{Bond21} relies on the conditions \eqref{condF} for the associated matrix $F(x)$ but does not use its specific construction. Applying Phragmen-Lindel\"of's Theorem (see \cite[Theorem~21]{Lev56}) and Liouville's Theorem, we conclude that $R(x, \la)$ is a constant unit triangular matrix $R(x)$ for each $x \in [0,1)$. Thus
\begin{equation} \label{Rx}
R(x) \tilde \Phi(x, \la) = \Phi(x, \la).
\end{equation}

Differentiating \eqref{Rx} and using the relations
\begin{equation} \label{eqPhi}
\Phi'(x, \la) = (F(x) + \Lambda) \Phi(x, \la), \quad
\tilde \Phi'(x, \la) = (\tilde F(x) + \Lambda) \tilde \Phi(x, \la),   
\end{equation}
we derive
\begin{equation} \label{eqR}
R'(x) + R(x) \tilde F(x) = F(x) R(x), \quad x \in (0,1).
\end{equation}

Rewrite \eqref{eqR} in the element-wise form:
$$
\begin{bmatrix}
0 & 0 & 0 \\
r_{21}' & 0 & 0 \\
r_{31}' & r_{32}' & 0
\end{bmatrix} +
\begin{bmatrix}
1 & 0 & 0 \\
r_{21} & 1 & 0 \\
r_{31} & r_{32} & 1
\end{bmatrix}
\begin{bmatrix}
a_{11} \tilde \sigma & 1 & 0 \\
a_{21} \tilde \sigma^2 & a_{22} \tilde \sigma & 1 \\
a_{31}\tilde \sigma^3 & a_{32} \tilde \sigma^2 & a_{33}\tilde \sigma
\end{bmatrix} = 
\begin{bmatrix}
a_{11} \sigma & 1 & 0 \\
a_{21} \sigma^2 & a_{22} \sigma & 1 \\
a_{31}\sigma^3 & a_{32} \sigma^2 & a_{33}\sigma
\end{bmatrix}  
\begin{bmatrix}
1 & 0 & 0 \\
r_{21} & 1 & 0 \\
r_{31} & r_{32} & 1
\end{bmatrix},
$$
where $a_{kj}$ are the constants from \eqref{defF}: $f_{kj} = a_{kj}\sigma^{k-j+1}$.

From this system, we find
\begin{align} \label{r21}
& r_{21} = -a_{11} \hat \sigma, \quad r_{32} = a_{33} \hat \sigma, \quad \hat \sigma := \sigma - \tilde \sigma, \\ \label{r21p}
& r_{21}' + a_{11} r_{21} \tilde \sigma + a_{21} \tilde \sigma^2 = a_{21} \sigma^2 + a_{22} \sigma r_{21} + r_{31}, \\ \label{r32p}
& r_{32}' + r_{31} + a_{22} r_{32} \tilde \sigma + a_{32} \tilde \sigma^2 = a_{32} \sigma^2 + a_{33} r_{32} \sigma, \\
\label{r31p}
& r_{31}' + a_{11} r_{31} \tilde \sigma + a_{21} r_{32} \tilde \sigma^2 + a_{31} \tilde \sigma^3 = a_{31}\sigma^3 + a_{32} \sigma^2 r_{21} + a_{33} \sigma r_{31}.
\end{align}

According to \eqref{eqPhi}, the matrix functions $\Phi(x, \la)$ and $\tilde \Phi(x, \la)$ are absolutely continuous by $x \in [0,1]$, and so do $r_{jk}(x)$. Hence \eqref{r21} implies $\hat \sigma \in AC[0,1]$. Substituting \eqref{r21} into \eqref{r21p} and \eqref{r32p}, we obtain
\begin{align*} 
r_{31} & = -a_{11} \hat \sigma' + \bigl( (a_{22}a_{11} - a_{21})\sigma - (a_{11}^2 + a_{21})\tilde \sigma\bigr) \hat \sigma \\
& = -a_{33} \hat \sigma' + \bigl( (a_{32} - a_{22}a_{33}) \tilde \sigma + (a_{32} + a_{33}^2)\sigma \bigr) \hat \sigma.
\end{align*}
Using the formulas for the coefficients \eqref{defF}, we get
\begin{equation} \label{relr31}
r_{31} = (s+\kappa) \hat \sigma' + \frac{1}{2}(s^2 - \kappa^2) \hat \sigma^2 = -(s-\kappa) \hat \sigma' + \frac{1}{2}(s^2 - \kappa^2) \hat \sigma^2.
\end{equation}

In the case $s = 1$, the equality \eqref{relr31} implies $\hat \sigma' = 0$.
In the case $s = 0$, wlog $\kappa = 1$ and $r_{31} = \hat \sigma' - \frac{1}{2} \hat \sigma^2$. 
Consequently, we have $\hat \sigma' \in AC[0,1]$. Using \eqref{r21} and \eqref{r31p}, we obtain $\hat \sigma'' = 0$.

It follows from \eqref{RC} and $U = \tilde U$ that $R(0) = I$. In particular $r_{21}(0) = r_{31}(0) = r_{32}(0) = 0$, so $\hat \sigma(0) = \hat \sigma'(0) = 0$. Therefore, in the both cases $s = 0, 1$, we get $\hat \sigma \equiv 0$, which implies $r_{21}(x) \equiv r_{32}(x) \equiv r_{31}(x) \equiv 0$. Thus $\sigma(x) = \tilde \sigma(x)$ a.e. on $(0,1)$, $R(x) \equiv I$ and $\Phi(x, \la) \equiv \tilde \Phi(x, \la)$. Applying Lemma~\ref{lem:findV} concludes the proof.
\end{proof}

\begin{remark}
It follows from the proof of Theorem~\ref{thm:uniqf} that, if the matrix $U$ were unknown, then the Weyl-Yurko matrix $M(\la)$ would uniquely specify the distribution $\sigma'$ for $s = 1$ and $\sigma''$ for $s = 0$ but not $\sigma$.   
\end{remark}

For the case $s = 1$, Theorem~\ref{thm:uniqf} can be strengthened.
The following corollary shows that
the specification of a single element of the matrix $L_U = [l_{jk}]_{j,k = 1}^3$ is sufficient to uniquely determine the other two.

\begin{cor} \label{cor:U}
Suppose that $s = 1$, $P_U = \tilde P_U$, $P_V = \tilde P_V$, and $M(\la) \equiv \tilde M(\la)$, and one of the following conditions is fulfilled:
\begin{itemize}
\item $\kappa = 1$ and $l_{21} = \tilde l_{21}$.
\item $\kappa = -1$ and $l_{32} = \tilde l_{32}$.
\item $\kappa \ne \pm 1$ and $l_{jk} = \tilde l_{jk}$ for at least one pair of indices $(j,k) \in \{ (2,1), (3,1), (3,2) \}$.
\end{itemize}
Then $\sigma(x) = \tilde \sigma(x)$ a.e. on $(0,1)$ and $U = \tilde U$.
\end{cor}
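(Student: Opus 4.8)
The plan is to re-run the argument of Theorem~\ref{thm:uniqf} \emph{without} the hypothesis $U = \tilde U$ and then use the single prescribed entry of $L_U$ to kill the leftover constant. First I would set $R(x,\la) = \Phi(x,\la)\tilde\Phi^{-1}(x,\la) = C(x,\la)\tilde C^{-1}(x,\la)$ and note that every step of the proof of Theorem~\ref{thm:uniqf} up to ``$R$ is a constant unit lower-triangular matrix'' and the relations \eqref{r21}--\eqref{relr31} is valid verbatim, since that part never uses $U = \tilde U$ (the equality $R(0) = I$ is invoked only afterwards). Because $s = 1$, \eqref{relr31} forces $\hat\sigma' = 0$, so $\hat\sigma = \sigma - \tilde\sigma \equiv c$ is a constant and $R$ is a single constant matrix with $r_{21} = (1+\kappa)c$, $r_{32} = (1-\kappa)c$, $r_{31} = \tfrac12(1-\kappa^2)c^2$. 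The corollary then reduces entirely to showing $c = 0$: once this holds, $\hat\sigma \equiv 0$ gives $\sigma = \tilde\sigma$, hence $R = I$ and $U = \tilde U$, and Lemma~\ref{lem:findV} settles $V$ as before.

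The ingredient replacing ``$R(0) = I$'' is to read $R$ off from the initial data. From $C(0,\la) = U^{-1}$, $\tilde C(0,\la) = \tilde U^{-1}$ and $P_U = \tilde P_U$ we get $R = R(0) = U^{-1}\tilde U = L_U^{-1}\tilde L_U$. Equating the three sub-diagonal entries yields $\tilde l_{21} - l_{21} = (1+\kappa)c$, $\tilde l_{32} - l_{32} = (1-\kappa)c$, and $(\tilde l_{31} - l_{31}) + l_{32}(l_{21} - \tilde l_{21}) = \tfrac12(1-\kappa^2)c^2$. The first two are linear in $c$: the hypothesis $l_{21} = \tilde l_{21}$ forces $(1+\kappa)c = 0$, and $l_{32} = \tilde l_{32}$ forces $(1-\kappa)c = 0$. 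This disposes of the first two bullets ($\kappa = 1$ via $l_{21}$, $\kappa = -1$ via $l_{32}$) and, for $\kappa \ne \pm 1$, of the two sub-cases $l_{21} = \tilde l_{21}$ and $l_{32} = \tilde l_{32}$.

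The main obstacle is the remaining sub-case $l_{31} = \tilde l_{31}$ with $\kappa \ne \pm 1$. The $(3,1)$ relation still carries the cross-term $l_{32}(l_{21} - \tilde l_{21})$, so substituting $l_{21} - \tilde l_{21} = -(1+\kappa)c$ produces a \emph{quadratic}, $(1+\kappa)c\bigl[\tfrac12(1-\kappa)c + l_{32}\bigr] = 0$, rather than a linear equation. This does not force $c = 0$ by itself, since it also admits the root $c = 2l_{32}/(\kappa-1)$; indeed the transformation $\tilde\sigma = \sigma - c$, $\tilde L_U = L_U R$, $\tilde L_V = L_V R$ (keeping $P_U, P_V$) preserves $M(\la)$ and realizes $\tilde l_{31} = l_{31}$ with $c \ne 0$ precisely when $l_{32} = \tfrac12(\kappa-1)c$. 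I therefore expect this sub-case to be the crux: establishing $c = 0$ here would require an extra relation or normalization beyond the three entrywise identities, and without one the claim for the pair $(3,1)$ looks too strong and should be double-checked. Modulo this point, the argument closes exactly as in Theorem~\ref{thm:uniqf}.
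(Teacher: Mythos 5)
Your overall strategy is exactly the paper's: run the proof of Theorem~\ref{thm:uniqf} without $U = \tilde U$ to get $\hat\sigma \equiv c$ and a constant $R$, then replace $R(0) = I$ by $R(0) = L_U^{-1}\tilde L_U$ (the paper's relations \eqref{sm1}--\eqref{sm2}) and use the prescribed entry of $L_U$ to force $c = 0$. Your treatment of the first two bullets and of the pairs $(2,1)$, $(3,2)$ coincides with the paper's proof. Moreover, your expressions $r_{21} = (1+\kappa)c$, $r_{32} = (1-\kappa)c$, $r_{31} = \tfrac12(1-\kappa^2)c^2$ are the correct ones: the paper's \eqref{sm1} contains typos --- the signs of $r_{21}, r_{32}$ (harmless, amounting to $c \mapsto -c$) and, crucially, the power of $c$ in $r_{31}$, which contradicts the paper's own \eqref{relr31} (for $s=1$, $\hat\sigma'=0$ it gives $\tfrac12(1-\kappa^2)\hat\sigma^2$, quadratic, not linear).

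Your suspicion about the pair $(3,1)$ is justified, and what you have found is a genuine error in the paper, not a gap in your own argument. The paper's proof of that case deduces ``$c = 0$ or $l_{32} = \tfrac12(1-\kappa)c$'' and then claims that in the latter case $\tilde l_{32} = \tfrac12(1-\kappa)c$, ``so $r_{32}(0) = 0$ and $c = 0$''; but with the correct quadratic $r_{31}$ the latter branch gives $\tilde l_{32} = l_{32} + r_{32}(0) = -l_{32}$, so $r_{32}(0) = (1-\kappa)c \ne 0$, which is perfectly consistent and yields no contradiction (the paper's conclusion only follows from the erroneous linear expression in \eqref{sm1}, or from forgetting that swapping $\mathcal L \leftrightarrow \tilde{\mathcal L}$ flips the sign of $c$). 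Your gauge transformation is valid: for constant $c$, the matrix $R$ with the three entries above satisfies $R\tilde F = FR$ identically when $\tilde\sigma = \sigma - c$ (all of \eqref{r21}--\eqref{r31p} plus the $(2,2)$ entry hold; in particular the $(3,1)$ relation \eqref{r31p} cancels identically for $s = 1$, which is precisely why no further constraint on $c$ appears), and $R\Lambda = \Lambda R = \Lambda$, so $C = R\tilde C$, $\Phi = R\tilde\Phi$, and hence $M \equiv \tilde M$ for $\tilde U = U R$, $\tilde V = V R$. Taking any $l_{32} \ne 0$ and $c = 2 l_{32}/(\kappa - 1)$ (e.g. $\kappa = 0$, $l_{32} = -\tfrac12$, $c = 1$) gives $\tilde l_{31} = l_{31}$, $P_U = \tilde P_U$, $P_V = \tilde P_V$, $M \equiv \tilde M$, yet $\sigma \ne \tilde\sigma$ and $U \ne \tilde U$. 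So the third bullet of Corollary~\ref{cor:U} is false for the pair $(3,1)$ alone; it holds only for $(2,1)$ and $(3,2)$ (the $(3,1)$ case survives only with extra information, e.g. $l_{32} = 0$). Your proof is complete and correct for all the remaining cases.
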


\begin{proof}
On the one hand, the proof of Theorem~\ref{thm:uniqf} for $s = 1$ implies $\hat \sigma' = 0$, so $\hat\sigma = c$, where $c$ is a constant. Therefore, it follows from \eqref{r21} and $\eqref{relr31}$ that
\begin{equation} \label{sm1}
r_{21} = -(1+\kappa) c, \quad r_{32} = -(1-\kappa) c, \quad r_{31} = \tfrac{1}{2} (1 - \kappa^2) c.
\end{equation}

On the other hand, the relations \eqref{RC}, \eqref{initC}, and $P_U = \tilde P_U$ imply $R(0) = L_U^{-1} \tilde L_U$, so
\begin{equation} \label{sm2}
r_{21}(0) = \tilde l_{21} - l_{21}, \quad
r_{32}(0) = \tilde l_{32} - l_{32}, \quad
r_{31}(0) = l_{32}(l_{21} - \tilde l_{21}) + \tilde l_{31} - l_{31}.
\end{equation}

Consider the case $\kappa \ne \pm 1$ and $l_{31} = \tilde l_{31}$. Then \eqref{sm1} and \eqref{sm2} imply
$$
r_{31}(0) = l_{32}(l_{21} - \tilde l_{21}) = l_{32} (1 + \kappa) c = \tfrac{1}{2}(1 - \kappa^2)c.
$$
Hence $c = 0$ or $l_{32} = \tfrac{1}{2} (1 - \kappa) c$. In the latter case, from the expressions of $r_{32}$, we get $\tilde l_{32} = \tfrac{1}{2} (1 - \kappa) c$, so $r_{32}(0) = 0$ and $c = 0$. Anyway, we conclude that $\hat \sigma \equiv 0$, $l_{21} = \tilde l_{21}$ and $l_{32} = \tilde l_{32}$ from \eqref{sm1} and \eqref{sm2}.
The other cases are studied analogously.
\end{proof}

\begin{remark}
As in \cite{Bond21}, the elements of the Weyl-Yurko matrix can be represented in the form
$$
m_{jk}(\la) = -\frac{\Delta_{jk}(\la)}{\Delta_{kk}(\la)}, \quad 1 \le k < j \le 3,
$$
where $\Delta_{jk}(\la)$ ($1 \le k \le j \le 3$) are the analytic characteristic functions, which can be uniquely recovered from their zeros, coinciding with the eigenvalues of the corresponding problems $\mathcal L_{jk}$ for the system \eqref{sys} with the boundary conditions
$$
U_i Y(0) = 0, \quad i = \overline{1,k-1}, j, \qquad
V_l Y(1) = 0, \quad l = \overline{1,3-k}.
$$
Consequently, the Weyl-Yurko matrix $M(\la)$ is uniquely determined by specification of the five spectra of the problems $\mathcal L_{11}$, $\mathcal L_{21}$, $\mathcal L_{22}$, $\mathcal L_{31}$, $\mathcal L_{32}$, and so is $\sigma$. Therefore, Theorem~\ref{thm:uniqf} can be considered as an analog of the famous result by Borg \cite{Borg46}, which says that the potential of the second-order Sturm-Liouville operator is uniquely specified by two spectra.
\end{remark}

\section{Inverse problem on the half-line} \label{sec:half}

In this section, we suppose that $\sigma \in (L_1 \cap L_3)(\mathbb R_+)$, $\mathbb R_+ := (0,+\infty)$, $s$ and $\kappa$ are fixed.
Then the elements of the matrix $F(x)$ defined by \eqref{defF} belong to $L_1(\mathbb R_+)$.

Define the constant matrix $U = P_U L_U$ and the matrix solution $C(x,\la)$ of the system~\ref{sys} as in Section~\ref{sec:fin}. Let $\la = \rho^n$. For each fixed value of $\arg \rho$, suppose that the roots $\{ \om_k \}_{k = 1}^n$ of the equation $\om^n = 1$ are numbered so that
$$
\mbox{Re}(\rho \om_1) \le \mbox{Re}(\rho \om_2) \le \dots \le \mbox{Re}(\rho \om_n).
$$
Denote by $\Phi_k(x,\la)$ ($k = \overline{1,3}$) the solutions of \eqref{sys} satisfying the boundary conditions
$$
U_i \Phi_k = \de_{i,k}, \quad i = \overline{1,k}, \quad \Phi_k(x,\la) = O(\exp(\rho \om_k x)), \quad x \to \infty.
$$
Then $\Phi(x,\la) = C(x,\la) M(\la)$, where $M(\la) = [m_{jk}(\la)]_{j,k = 1}^3$ is the Weyl-Yurko matrix for the half-line case. The elements $m_{jk}(\la)$ ($j > k$) are analytic in the $\la$-plane with the cut $(-1)^{k+1} \la \ge 0$ except for at most countable bounded set of poles as in \cite[Theorem~2.1.1]{Yur02}.

Consider the inverse spectral problem.

\begin{ip} \label{ip:half}
Given the Weyl-Yurko matrix $M(\la)$, find $\sigma$ and $U$.
\end{ip}

We prove the uniqueness of solution for Inverse Problem~\ref{ip:half}:

\begin{thm} \label{thm:uniqh}
If $M(\la) \equiv \tilde M(\la)$, then $\sigma(x) = \tilde \sigma(x)$ a.e. on $\mathbb R_+$ and $U = \tilde U$.
\end{thm}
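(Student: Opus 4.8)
The plan is to reproduce the scheme of the proof of Theorem~\ref{thm:uniqf}, with the right-endpoint boundary data replaced by the decay at infinity and, crucially, by the integrability $\sigma, \tilde\sigma \in L_1(\mathbb R_+)$. First I would set
$$
R(x,\la) := \Phi(x,\la)\tilde\Phi^{-1}(x,\la),
$$
and use $M(\la) \equiv \tilde M(\la)$ together with $\Phi = CM$, $\tilde\Phi = \tilde C \tilde M$ to obtain, exactly as in \eqref{RC}, that $R(x,\la) = C(x,\la)\tilde C^{-1}(x,\la)$. As in Section~\ref{sec:fin}, $C(x,\la)$ is entire in $\la$ of order at most $\tfrac13$ for each fixed $x \ge 0$, while $\det\tilde C(x,\la)$ is independent of $\la$ and nonvanishing; hence the entries of $\tilde C^{-1}(x,\la)$, and therefore of $R(x,\la)$, are entire in $\la$ of order at most $\tfrac13$. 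Note that, although $M(\la)$ itself is only sectorially analytic with cuts, these singularities cancel in $R = C\tilde C^{-1}$, so $R$ is genuinely entire.

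Next I would establish the half-line analog of the asymptotics \eqref{asymptr}. Since both $F$ and $\tilde F$ satisfy the structural conditions \eqref{condF}, and the Weyl-type solutions $\Phi_k$, $\tilde\Phi_k$ share the same leading exponential behavior in the sectors determined by the ordering $\mathrm{Re}(\rho\om_1) \le \dots \le \mathrm{Re}(\rho\om_n)$, the quotient $R = \Phi\tilde\Phi^{-1}$ stays bounded on non-real rays with the off-diagonal growth controlled exactly as in \eqref{asymptr}. Combining this with the entirety and order $\le \tfrac13$, the Phragmen-Lindel\"of and Liouville theorems force $R(x,\la)$ to be, for each fixed $x$, a constant unit lower-triangular matrix $R(x)$ independent of $\la$. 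From here the argument is identical to the finite-interval case: differentiating $R(x)\tilde\Phi = \Phi$ and using \eqref{eqPhi} gives the matrix equation \eqref{eqR}, whose element-wise form yields \eqref{r21}--\eqref{relr31} verbatim, since those relations involve only the associated matrices $F$ and $\tilde F$; in particular $\hat\sigma := \sigma - \tilde\sigma$ inherits the local absolute continuity of $r_{jk}$.

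The conclusion then splits according to $s$. For $s = 1$, relation \eqref{relr31} gives $\hat\sigma' = 0$, so $\hat\sigma$ is constant; for $s = 0$ (wlog $\kappa = 1$), the computation that produced $\hat\sigma'' = 0$ in Theorem~\ref{thm:uniqf} applies, so $\hat\sigma$ is affine. The new ingredient replacing the normalization $R(0) = I$ of the finite interval is the integrability: because $\sigma, \tilde\sigma \in L_1(\mathbb R_+)$, we have $\hat\sigma \in L_1(\mathbb R_+)$, and an integrable constant (for $s = 1$) or affine (for $s = 0$) function on $\mathbb R_+$ must vanish identically. Hence $\hat\sigma \equiv 0$, i.e. $\sigma = \tilde\sigma$ a.e. on $\mathbb R_+$. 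Then \eqref{r21} and \eqref{relr31} force $r_{21} \equiv r_{32} \equiv r_{31} \equiv 0$, so $R(x) \equiv I$; evaluating at $x = 0$ and using \eqref{initC} gives $R(0) = U^{-1}\tilde U = I$, whence $U = \tilde U$.

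I expect the main obstacle to be the asymptotic and analytic step on the half-line. On the finite interval the asymptotics \eqref{asymptr} are quoted from \cite{Bond21}; on the half-line the Weyl solutions $\Phi_k$ are only sectorially analytic with cuts along $(-1)^{k+1}\la \ge 0$, so care is needed to confirm that $R = \Phi\tilde\Phi^{-1}$ is nonetheless entire and enjoys the required sectorial bounds, and that these bounds are uniform enough to apply Phragmen-Lindel\"of in the sectors bounded by the rays $\arg\la \in \{0,\pi\}$. Once the entirety of $R$ and its order-$\tfrac13$ sectorial asymptotics are secured, the remainder is routine and parallels Section~\ref{sec:fin}, the only genuinely new point being the integrability argument that removes the constant, respectively affine, ambiguity.
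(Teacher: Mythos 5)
Your scheme for recovering $\sigma$ (spectral mappings, Liouville, the relations \eqref{r21}--\eqref{relr31}, and the $L_1$-integrability argument replacing the finite-interval normalization $R(0)=I$) is the same as the paper's, and the integrability step is handled correctly. However, there is a genuine gap at the start: on the half-line \emph{nothing} about $U$ is assumed --- both the permutation $P_U$ and the factor $L_U$ are unknown --- whereas every finite-interval statement in Section~\ref{sec:fin} that invokes the asymptotics \eqref{asymptr} hypothesizes equality of the permutation parts (Theorem~\ref{thm:uniqf} assumes $U=\tilde U$; Corollary~\ref{cor:U} assumes $P_U=\tilde P_U$). The asymptotics \eqref{asymptr} for $R=\Phi\tilde\Phi^{-1}$ are simply false when $P_U\ne\tilde P_U$: the Weyl solutions carry powers of $\rho$ determined by the permutation (this is exactly what produces the factor $\rho^{p_j-p_k}$ in the asymptotics of $m_{jk}$), and your appeal to ``the same leading exponential behavior'' accounts only for the exponentials $\exp(\rho\om_k x)$, not for these polynomial factors. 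A concrete way to see the failure: at $x=0$ one has $R(0,\la)=U^{-1}\tilde U=L_U^{-1}\bigl(P_U^{-1}\tilde P_U\bigr)\tilde L_U$ identically in $\la$, and this matrix is unit lower triangular only if $P_U^{-1}\tilde P_U=I$; so if $P_U\ne\tilde P_U$, the conclusion of your Phragmen--Lindel\"of/Liouville step (that $R(x,\la)$ is a constant \emph{unit triangular} matrix) is already contradicted at $x=0$. Nor can you recover $P_U=\tilde P_U$ afterwards from the triangularity of $R(0)$, since that triangularity is itself the output of the step in question --- the argument would be circular.

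The paper closes this gap as its first step: the elements of the Weyl--Yurko matrix on the half-line satisfy $m_{jk}(\rho^3)=\rho^{p_j-p_k}\mu_{jk}(1+o(1))$ as $|\rho|\to\infty$ with $\mu_{jk}\ne 0$ (the analog of (2.1.11) in \cite{Yur02}), so $M\equiv\tilde M$ forces $p_j-p_k=\tilde p_j-\tilde p_k$ for all $j,k$; since $(p_1,p_2,p_3)$ and $(\tilde p_1,\tilde p_2,\tilde p_3)$ are both permutations of $\{1,2,3\}$, this yields $P_U=\tilde P_U$. With that in hand, the remainder of your argument --- entirety of $R=C\tilde C^{-1}$, the sectorial bounds, $\hat\sigma''=0$, the $L_1$ argument killing the affine ambiguity, and finally $R(0)=U^{-1}\tilde U=I$ giving $U=\tilde U$ --- coincides with the paper's proof.
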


\begin{proof}
The elements of the Weyl-Yurko matrix possess asymptotics analogous to (2.1.11) in \cite{Yur02}:
$$
m_{jk}(\rho^n) = \rho^{p_j - p_k} \mu_{mk} (1 + o(1)), \quad |\rho| \to \infty,   
$$
where $(p_1, p_2, p_3)$ is the permutation related to $P_U$ and $\mu_{mk} \ne 0$ are some constants. Consequently, the matrix $P_U$ is uniquely determined by the Weyl-Yurko matrix.

Repeating the arguments from the proof of Theorem~\ref{thm:uniqf}, we conclude that $\hat \sigma'' = 0$. Hence $\hat \sigma(x) = c_1 x + c_2$. Since $\sigma$ and $\tilde \sigma$ belong to $L_1(\mathbb R_+)$, so does $\hat \sigma$, then $\hat \sigma \equiv 0$. Thus $\sigma(x) = \tilde \sigma(x)$ a.e. on $\mathbb R_+$. 

Using \eqref{r21} and \eqref{relr31}, we get $R(0) = I$. It follows from \eqref{RC} and \eqref{initC} that $R(0) = U^{-1} \tilde U$, so $U = \tilde U$.
\end{proof}

\begin{remark}
In contrast to the finite interval case, the Weyl-Yurko matrix $M(\la)$ for the half-line uniquely specifies the matrix $U$, roughly speaking, because we have the additional boundary condition $\sigma(\infty) = 0$. This result is analogous to \cite[Theorem~4.3]{Bond23-mmas}, which is valid for $\sigma \in W_1^1(\mathbb R_+)$. For a finite interval, Theorem~5.3 in \cite{Bond23-mmas} implies the uniqueness of recovering the elements $l_{21}$ and $l_{32}$ of the matrix $L_U$ for $\sigma \in W_1^1[0,1]$ when $l_{31}$ is known. However, our results for $\sigma \in L_3(0,1)$ differ (see Theorem~\ref{thm:uniqf} and Corollary~\ref{cor:U}).
In particular, for $s = 0$, elements of $L_U$ are not determined by the Weyl-Yurko matrix.
\end{remark}

\begin{remark}
For $s = 1$ and $\kappa \in \{-1, 0, 1 \}$, the results of Sections~\ref{sec:fin} and~\ref{sec:half} are valid for $\sigma$ in $L_2(0,1)$ and $(L_1 \cap L_2)(\mathbb R_+)$, respectively.
\end{remark}

\section{Reconstruction and open problems}

The solutions of the inverse spectral problems for the system \eqref{sys} can be constructed by the method of spectral mappings \cite{Yur02, Bond22, Bond24}. In particular, Theorem~1 and Algorithm~1 in \cite{Bond22} and Theorem~1 in \cite{Bond24} are applicable, because they are valid for any matrix $F \in \mathfrak F_n$ satisfying the additional conditions $f_{kk} \in L_2(0,1)$, $f_{kj} \in L_1(0,1)$, and $\mbox{trace}\,(F) = 0$, which hold for the matrix \eqref{defF}. Those results allow one to derive a linear main equation for Inverse Problem~\ref{ip:finite} and to recover the Weyl solutions $\Phi_k(x,\la)$ from the Weyl-Yurko matrix. Furthermore, the main equation is uniquely solvable by necessity due to \cite[Theorem~1]{Bond22} and, in the self-adjoint case under some additional restrictions on the spectral data, by sufficiency due to \cite[Theorem~1]{Bond24}. By self-adjoint, we mean the case of real-valued $\sigma(x)$ and $\kappa \in \{ 0, \mathrm{i} \}$. Then the differential expression $\mathrm{i} \ell_{\sigma}(y)$ is formally self-adjoint. However, the reconstruction of distribution coefficients of a differential expression from the solutions $\Phi_k(x, \la)$ is a non-trivial issue (see the discussion in \cite[Section~4]{Bond22}). For the coefficients $\tau_1 \in L_2(0,1)$ and $\tau_0 \in W_2^{-1}(0,1)$ of the differential expression
\begin{equation} \label{pq}
y''' + (\tau_1 y)' + \tau_1 y' + \tau_0 y,
\end{equation}
the reconstruction formulas have been obtained in \cite[Theorem~3]{Bond22}.

For $s = 1$, we deduce from \cite[Theorem~3]{Bond22} the following formula for recovering $\sigma(x)$:
\begin{equation} \label{ser1}
\sigma = \tilde \sigma - \frac{3}{2s} \sum_{(l,k,\eps) \in V} \bigl( \phi_{l,k,\eps}(x) \tilde \eta_{l,k,\eps}(x) - A_{l,k,\eps}\bigr),
\end{equation}
where the notations of \cite{Bond22} are used. The asymptotics of spectral data can be obtained from \cite{Bond22-jms}. In particular, the eigenvalues of the boundary value problems \eqref{sys}, \eqref{bcY} have the asymptotics
\begin{equation} \label{asymptla}
\la_{n,k} = (-1)^{k-1} \left( \frac{2\pi}{\sqrt 3} \bigl(n + \chi_k + \varkappa_{n,k}\bigr)\right)^3, \quad n \in \mathbb N, \quad k = 1, 2,
\end{equation}
where $\{ \varkappa_{n,k} \}$ is a sequence of the Fourier coefficients of an $L_3$-function. Such asymptotics imply the convergence of the series \eqref{ser1} in $L_3(0,1)$ for an appropriate choice of regularizing constants $A_{l,k,\eps}$ due to \cite[Lemma~8]{Bond22}. Furthermore, the series \eqref{ser1} can be applied to obtain stability estimates for the inverse problem solution.

Anyway, the study of the differential expression \eqref{expr} and of the related inverse spectral problems implies several open problems:

\smallskip

\textit{Regularization}. It is unknown how to construct an associated matrix for the differential expression \eqref{pq}
with two independent coefficients $\tau_1 \in W^{-1}_{3,loc}(0,T)$ and $\tau_0 \in W_{3,loc}^{-2}(0,T)$.
Also, generalizations to higher odd orders should be studied.

\smallskip

\textit{Reconstruction for $s = 0$}. In the case $s = 0$, Theorem~3 in \cite{Bond22} implies the following formal series for $\sigma(x)$:
\begin{equation} \label{ser}
\sigma'(x) = \tilde \sigma'(x) + 3 \kappa^{-1} \sum_{(l,k,\eps) \in V} \phi_{l,k,\eps}(x) \tilde \eta'_{l,k,\eps}(x),
\end{equation}
where the notations of \cite{Bond22} are used. The convergence of this series in $W_3^{-1}(0,1)$ for $\sigma \in L_3(0,1)$ is a challenge. The asymptotics of form \eqref{asymptla} do not directly imply the convergence of the series \eqref{ser}, as in \cite{Bond22, Bond23-res}.

\smallskip

\textit{Existence}. A small perturbation of the spectral data for the differential expression \eqref{expr}
leads to the independence of two coefficients as in \eqref{pq}. This is an obstacle in obtaining local and global solvability of inverse problems for the system \eqref{sys} with the matrix \eqref{defF}.

\medskip

{\bf Funding.} This work was supported by Grant 24-71-10003 of the Russian Science Foundation, https://rscf.ru/en/project/24-71-10003/.

\smallskip

{\bf Acknowledgment.} The author is grateful to Prof. Sergey Buterin for his valuable comments.

\medskip

\noindent Natalia Pavlovna Bondarenko \\

\noindent 1. Department of Mechanics and Mathematics, Saratov State University, 
Astrakhanskaya 83, Saratov 410012, Russia, \\

\noindent 2. Department of Applied Mathematics, Samara National Research University, \\
Moskovskoye Shosse 34, Samara 443086, Russia, \\

\noindent 3. S.M. Nikolskii Mathematical Institute, RUDN University, 6 Miklukho-Maklaya St, Moscow, 117198, Russia, \\

\noindent 4. Moscow Center of Fundamental and Applied Mathematics, Lomonosov Moscow State University, Moscow 119991, Russia.\\

\noindent e-mail: {\it bondarenkonp@sgu.ru}

\end{document}